\newtheorem{lemma}{Lemma}
\newtheorem{theorem}{Theorem}
\newtheorem*{kr}{Theorem A}
\theoremstyle{definition}
\newtheorem{remark}{Remark}
\newtheorem{note}{Note}
\newtheorem*{corrections}{Corrections to [13]}
\begin{document}
\title{Note on relations among multiple zeta(-star) values II}
\author{Masahiro Igarashi}
\date{}
\maketitle
\begin{abstract}
I obtain new evaluations of special values of multiple polylogarithms by using a limiting case of a basic hypergeometric identity  of G. E. Andrews.
\end{abstract}
\begin{flushleft}
\textbf{Keywords}: Multiple polylogarithm; Multiple zeta value; Multiple zeta-star value; Hypergeometric series
\end{flushleft}
\begin{flushleft}
\textbf{2020 Mathematics Subject Classification}: Primary 11M32; Secondary 33C20, 33C90
\end{flushleft}
\section{Introduction}
In the present paper, we deal with the following special values of multiple polylogarithms (MPLs for short): 
\begin{equation}
\sum_{0< m_1< \cdots<m_n<\infty}
\frac{z^{m_n-1}}{m_1^{k_1} \cdots m_n^{k_n}},
\end{equation}
\begin{equation}
\sum_{1{\le}m_1\le\cdots{\le}m_n<\infty}
\frac{z^{m_n-1}}{m_1^{k_1} \cdots m_n^{k_n}},
\end{equation}
where $z=\pm1$, $1{\le}k_i\in\mathbb{Z}$ $(i=1,\ldots,n-1)$ and $2{\le}k_n\in\mathbb{Z}$. 
If $z=1$, the values (1) and (2) become the multiple zeta value $\zeta(\{k_i\}^{n}_{i=1})$ (MZV for short) and the multiple zeta-star value $\zeta^{\star}(\{k_i\}^{n}_{i=1})$ (MZSV for short), respectively, where $\{k_i\}^{n}_{i=1}:=k_1,\dots,k_n$  (see Euler \cite{eu}, Hoffman \cite{h}, and Zagier \cite{z}). We denote by $\zeta_{-}(\{k_i\}^{n}_{i=1})$ and 
$\zeta^{\star}_{-}(\{k_i\}^{n}_{i=1})$ the case $z=-1$ of (1) and of (2), respectively. It is known that special values of MPLs satisfy numerous relations 
(see, e.g., \cite{bbv}, \cite{bbbl}, and \cite{bb}). In the present paper, 
we deal also with the generalized hypergeometric series
\begin{equation*}
{}_{p+1}F_p\left(
\begin{array}{c}
a_1,\ldots,a_{p+1}\\
b_1,\ldots,b_p
\end{array}
;z\right)
:=
\sum_{m=0}^{\infty}\frac{(a_1)_m\cdots(a_{p+1})_m}{(b_1)_m\cdots(b_p)_m}\frac{z^m}{m!},
\end{equation*}
where $1{\le}p\in\mathbb{Z}$, $z$, $a_1,\ldots,a_{p+1}\in\mathbb{C}$, $b_1,\ldots,b_p\in\mathbb{C}\setminus\{0,-1,-2,\ldots\}$; and $(a)_m$ denotes 
the Pochhammer symbol, i.e., $(a)_m=a(a+1)\cdots(a+m-1)$ 
($1{\le}m\in\mathbb{Z}$) and $(a)_0=1$. This power series converges absolutely for all $z\in\mathbb{C}$ such that $|z|=1$ provided $\mathrm{Re}\,(\sum_{i=1}^{p}b_i-\sum_{i=1}^{p+1}a_i)>0$. 
In \cite{i2011} and \cite{i4}, I showed that various new and interesting relations among special values of MPLs can be derived from 
the following limiting case of Andrews' basic hypergeometric identity \cite[Theorem 4]{an}: 
\begin{kr}[Krattenthaler and Rivoal {\cite[Proposition 1 (i)]{kr}}, 
a limiting case of {\cite[Theorem 4]{an}}]
Let $s$ be a positive integer and $a$, $b_i$, $c_i$ $(i=1,\ldots,s+1)$ complex numbers. 
Suppose that the complex numbers $a$, $b_i$, $c_i$ $(i=1,\ldots,s+1)$ satisfy the conditions 
$1+a-b_i,\,1+a-c_i\notin\{0,-1,-2,\ldots\}$ $(i=1,\ldots,s+1),$
\begin{equation*}
\begin{aligned}
&\mathrm{Re}\left((2s+1)(a+1)-2\sum_{i=1}^{s+1}(b_i+c_i)\right)>0,\\
&\mathrm{Re}\left(\sum_{i=r}^{s+1}A_i(1+a-b_i-c_i)\right)>0 \quad (r=2,\ldots,s+1)
\end{aligned}
\end{equation*}
for all possible choices of $A_i=1$ or $2$ $(i=2,\ldots,s)$, $A_{s+1}=1$. 
$($For details of the choices of $A_i$, see \cite{kr}.$)$ 
Then 
\begin{equation*}
\begin{aligned}
&{}_{2s+4}F_{2s+3}\left(
\begin{array}{c}
a, \frac{a}{2}+1, b_1, c_1, \ldots, b_{s+1}, c_{s+1}\\
\frac{a}{2}, 1+a-b_1, 1+a-c_1, \ldots, 1+a-b_{s+1}, 1+a-c_{s+1}
\end{array}
;-1\right)\\
=
&\frac{\Gamma(1+a-b_{s+1})\Gamma(1+a-c_{s+1})}{\Gamma(1+a)\Gamma(1+a-b_{s+1}-c_{s+1})}\\
&\times\sum_{l_1,\ldots,l_s=0}^{\infty}\prod_{i=1}^{s}
\frac{(1+a-b_i-c_i)_{l_i}(b_{i+1})_{l_1+\cdots+l_i}(c_{i+1})_{l_1+\cdots+l_i}}
{l_i!(1+a-b_i)_{l_1+\cdots+l_i}(1+a-c_i)_{l_1+\cdots+l_i}}.
\end{aligned}
\end{equation*}
\end{kr}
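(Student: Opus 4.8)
\section*{Proof proposal}

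The plan is to obtain Theorem A as the termwise limit $q\to1^-$ of a very-well-poised multiple basic hypergeometric transformation of Andrews \cite[Theorem 4]{an}. In its $q$-form (possibly after a preliminary limiting specialization of one of Andrews' free parameters), that identity equates a single very-well-poised $q$-series --- whose numerator parameters are $a$, $qa^{1/2}$, $-qa^{1/2}$ together with the $s+1$ pairs $b_i$, $c_i$, whose denominator parameters are $a^{1/2}$, $-a^{1/2}$ together with $aq/b_i$, $aq/c_i$, and whose argument is a monomial in $q$ (such as $-q/b_{s+1}$) tending to $-1$ as $q\to1^-$ --- with an $s$-fold basic hypergeometric multisum carrying an infinite $q$-Pochhammer prefactor; it may be viewed, in the language of Bailey chains, as the $s$-fold iterate of a Bailey transform seeded by the $q$-analogue of Kummer's ${}_2F_1(-1)$ summation. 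The very-well-poised block $\frac{(a;q)_k(qa^{1/2};q)_k(-qa^{1/2};q)_k}{(a^{1/2};q)_k(-a^{1/2};q)_k}=(a;q)_k\,\frac{1-aq^{2k}}{1-a}$ is the $q$-shadow of the factor $(a)_k(\tfrac a2+1)_k/(\tfrac a2)_k=(a)_k\,\frac{a+2k}{a}$ carried by the three top parameters $a$, $\tfrac a2+1$ and the single bottom parameter $\tfrac a2$ in the statement, which is why no further parameters need to be tracked through the limit.

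Next I would set $a=q^{\alpha}$, $b_i=q^{\beta_i}$, $c_i=q^{\gamma_i}$, where $(\alpha,\beta_i,\gamma_i)$ denote the numbers called $(a,b_i,c_i)$ in Theorem A (a harmless abuse of notation in the limit). Using only the elementary facts $\lim_{q\to1^-}(q^{x};q)_m/(1-q)^m=(x)_m$ and $\lim_{q\to1^-}(q;q)_m/(1-q)^m=m!$ for fixed $m\in\mathbb{Z}_{\ge0}$, every ratio of $q$-Pochhammer symbols with the same number of factors in numerator and denominator degenerates to the corresponding ratio of ordinary Pochhammer symbols; in particular $\frac{1-aq^{2k}}{1-a}\to\frac{\alpha+2k}{\alpha}$, the argument tends to $-1$, and the $q$-Pochhammer prefactor tends --- through the $q$-Gamma function and the limit $\lim_{q\to1^-}\Gamma_q(x)=\Gamma(x)$ --- to $\frac{\Gamma(1+a-b_{s+1})\Gamma(1+a-c_{s+1})}{\Gamma(1+a)\Gamma(1+a-b_{s+1}-c_{s+1})}$, the prefactor in the statement. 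Collecting these degenerations turns Andrews' identity, term by term, into the asserted equality of the left-hand ${}_{2s+4}F_{2s+3}(-1)$ with the $s$-fold sum on the right.

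The only genuine difficulty is to justify interchanging the limit $q\to1^-$ with the now non-terminating single sum on the left and with the $s$-fold sum on the right, and this is exactly what the hypotheses are tailored for. Indeed, $\mathrm{Re}\big((2s+1)(a+1)-2\sum_{i=1}^{s+1}(b_i+c_i)\big)>0$ is, after cancelling the $\tfrac a2$'s, precisely the condition $\mathrm{Re}(\sum_j b_j-\sum_j a_j)>0$ recalled in the Introduction that makes the left-hand ${}_{2s+4}F_{2s+3}$ converge at $-1$; while the family $\mathrm{Re}\big(\sum_{i=r}^{s+1}A_i(1+a-b_i-c_i)\big)>0$, over all admissible $A_i\in\{1,2\}$, controls the decay of every nested inner tail $\sum_{l_r,l_{r+1},\dots}$ of the multisum. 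Together these inequalities furnish, for $q$ in a one-sided neighbourhood of $1$, a $q$-independent summable majorant for both sides, so dominated convergence applies. The combinatorial origin of the weights $A_i$ and the attendant uniform estimates are exactly those carried out in \cite{kr}, which I would cite rather than reproduce; this bookkeeping for the $s$-fold tail is the step I expect to be the main obstacle. (One can instead bypass $q$-series entirely and run the same argument classically: starting from Kummer's and Bailey's ${}_2F_1(-1)$- and ${}_3F_2(-1)$-summations, iterate the ordinary Bailey transform $s$ times, obtaining the identity --- and the same convergence conditions --- by induction on $s$.)
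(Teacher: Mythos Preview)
The paper does not give its own proof of Theorem~A; it is stated as a quoted result from Krattenthaler and Rivoal \cite[Proposition~1~(i)]{kr}, and the only description of the method is the sentence following the statement: Andrews' identity \cite[Theorem~4]{an} ``is a terminating one'' and ``the justification of taking the limit $N\to\infty$ in the identity was rigorously done by Krattenthaler and Rivoal in \cite{kr}''. Thus the route the paper refers to is: set $q=1$ in Andrews' terminating basic hypergeometric identity (a trivial step, since both sides are finite sums), obtaining a terminating ordinary hypergeometric identity carrying a parameter $-N$, and then let $N\to\infty$, the real-part inequalities in the hypotheses being exactly what is needed to control the tails of the single series and of the nested $s$-fold sum in that classical limit.

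Your proposal takes a different route: you want first to pass to a \emph{non-terminating} $q$-identity (via your unspecified ``preliminary limiting specialization'' of the terminating parameter) and only afterwards send $q\to1^{-}$ by dominated convergence. This is plausible in outline, but it is not the method the paper refers to, and it does not sidestep the analytic work: the preliminary step is itself an $N\to\infty$-type limit inside the $q$-world that you leave unjustified, and the subsequent $q\to1^{-}$ step then demands $q$-uniform summable majorants on ratios of $q$-Pochhammer symbols, which is at least as delicate as the classical $N\to\infty$ estimate. You yourself say you would cite \cite{kr} for the tail bounds rather than reproduce them --- at which point your argument reduces to the paper's own treatment, namely an appeal to \cite{kr}. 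Your closing parenthetical (iterate the ordinary Bailey transform, seeded by Kummer's ${}_2F_1(-1)$ summation, and induct on $s$) is a genuinely independent direct argument and may well succeed, but again it is not what the paper or \cite{kr} actually do.
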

Andrews' hypergeometric identity \cite[Theorem 4]{an} is a terminating one. 
The justification of taking the limit $N {\rightarrow} \infty$ in the identity was rigorously done by Krattenthaler and Rivoal in \cite{kr}. 
They applied Theorem A to proving an identity between hypergeometric series and multiple integrals related to construction of $\mathbb{Q}$-linear forms in the Riemann zeta values $\zeta(k)$, which is Zudilin's identity \cite[Theorem 5]{zu}. Their study is an application of Andrews' identity to diophantine problems of zeta values. The application to the study of relations among special values of MPLs was studied in my previous 
papers \cite{i2011} and \cite{i4}. In these papers, I showed the usefulness of Andrews' identity for such a study. In fact, I proved various new and interesting relations among special values of MPLs by using Theorem A: for example, the identities (29), (30), (28), (58) of \cite{i4}; see also \cite[(\textbf{A3}), (\textbf{A4}), Addendum]{i2011}. 
I note that my calculus of the Pochhammer symbol $(a)_m$ used in \cite{i4} 
can be applied to deriving multiple series identities from other hypergeometric identities. (My calculus was developed in the preprints of \cite{i4} distributed in 2013.) In the present paper, I again deal with the application of Theorem A, and obtain new evaluations of special values of MPLs: see Theorems 1, 2, and 3 below. 
The present paper and my previous papers \cite{i2011}, \cite{i4} are motivated by \cite[Theorem 4]{an}, \cite[Proposition 1]{kr}, and \cite[Remarks 2.6 and 2.7]{i1}: 
the hypergeometric identities of Andrews, Krattenthaler and Rivoal motivated me to study hypergeometric methods of proving relations among special values of MPLs. My research on the application of their hypergeometric identities was begun in 2009. (I had already noted the paper \cite{kr} in my preprint arXiv:0908.2536v1 in 2009.) The identities (29), (30), (28), (58) of \cite{i4} 
and their hypergeometric proofs are some of my original results in my research. 
My identities (29), (30) and (28), (58) were proved in 2011 and 2014, 
respectively. 
These four identities of mine were discovered by using \cite[Proposition 1]{kr}. 
The first two identities show the rightness of my observations stated in \cite[(\textbf{A3}), (\textbf{A4})]{i2011}; see \cite[Addendum]{i2011}. 
See also Note 2 at the end of Section 2. 
\begin{note} 
I obtained Applications 1 and 2 below in April--July 2013 and June 2014, respectively. These results of mine were written in unpublished earlier versions of 
\cite{i4} (2013--2014), but not written in the published version \cite{i4}, though Application 1 was written in 
the earlier versions of \cite{i4} distributed as preprints on October 28 and December 16, 2013. Application 1 was written also in the manuscript of \cite{i4} 
submitted to the Nagoya Mathematical Journal on March 9, 2015 and 
in its revised versions submitted to other journals in 2015--2016. 
After that, I wrote both of Applications 1 and 2 in an unpublished paper, which was submitted to several journals in December 2018--June 2019. 
(In the paper, Applications 1 and 2 were written as appendices.) 
I separated Applications 1 and 2 from the paper, and compiled them into the present paper. The present paper was submitted to many journals in July 2019--2021. 
\end{note} 
\section{Applications of Andrews' hypergeometric identity}
Hereafter we use the following notation: 
\begin{equation*}
\begin{aligned}
&\mathbb{Z}_{\ge{k}}:=\{k,k+1,k+2,\ldots\}, \quad \mathbb{Z}_{\le{k}}:=\{k,k-1,k-2,\ldots\},\\
&\{a\}^{n}:=\underbrace{a,\ldots,a}_{n},
\end{aligned}
\end{equation*}
where $k\in\mathbb{Z}$ and $1{\le}n\in\mathbb{Z}$; we regard $\{a\}^{0}$ as the empty set $\emptyset$.
\subsection{Application 1}
Using Theorem A, I can prove the following evaluations:
\begin{theorem}
Let $r\in\mathbb{Z}_{\ge0}$ and $s\in\mathbb{Z}_{\ge1}$. Then the sums
\begin{equation}
\begin{aligned}
\sum_{i=0}^{r}(-1)^{r-i}\left\{\binom{s-2+i}{i}+\binom{s-1+i}{i}\right\}
\zeta_{-}(\{1\}^{r-i},2s+i),
\end{aligned}
\end{equation}
\begin{equation}
\begin{aligned}
\sum_{i=0}^{r}(-1)^{r-i}2^{r-i+1}\left\{\binom{2s+i-1}{i}-\binom{2s+i-1}{i-1}\right\}
\zeta_{-}(\{1\}^{r-i},2s+i)
\end{aligned}
\end{equation}
can be expressed in $\mathbb{Q}$-polynomials of the Riemann zeta values 
$\zeta(k):=\sum_{m=1}^{\infty}m^{-k}$ $(k\in\mathbb{Z}_{\ge2})$. 
\end{theorem}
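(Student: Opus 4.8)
The plan is to specialise the free parameters of Theorem~A along a suitable family, to expand both sides as power series in one auxiliary variable $\varepsilon$ about $\varepsilon=0$, and to compare a single Taylor coefficient.

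\emph{Step 1: the specialisation.} Fix $s\ge1$ and let $a$ and the pairs $(b_i,c_i)$ $(i=1,\dots,s+1)$ be affine functions of $\varepsilon$ with integer constant terms, chosen so that: (i) for small real $\varepsilon>0$ the numbers $1+a-b_i,\,1+a-c_i$ avoid $\mathbb{Z}_{\le0}$ and the two real-part inequalities in the hypotheses of Theorem~A hold for every admissible choice of the $A_i$ --- the first of these is exactly the condition that the left-hand ${}_{2s+4}F_{2s+3}(-1)$ converge; (ii) at $\varepsilon=0$ the very-well-poised factor $(a)_m(\tfrac{a}{2}+1)_m/((\tfrac{a}{2})_m\,m!)$ together with the matched pairs $(b_i)_m/(1+a-b_i)_m$ makes the left-hand summand decay like $(-1)^m m^{-2s}$, so that its $\varepsilon^{r}$-coefficient has weight $2s+r$ and height one; (iii) on the right-hand side the choice collapses the $s$-fold series --- by cancellation of matched Pochhammer pairs and/or termination of the inner factors --- to the Gamma prefactor times a single classical sum (a balanced ${}_3F_2(1)$, or after a reflection a ${}_2F_1(-1)$) evaluable in closed form in Gamma functions of integer arguments. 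One such specialisation produces the binomials $\binom{s-2+i}{i}+\binom{s-1+i}{i}$, with generating function $(2-x)(1-x)^{-s}$ in $i$, hence identity~(3); a second, in which the relevant constant terms are moved to half-integers, produces $\binom{2s+i-1}{i}-\binom{2s+i-1}{i-1}$, with generating function $(1-2x)(1-x)^{-(2s+1)}$ --- the source of the powers $2^{\,r-i+1}$ --- hence identity~(4). The two cases run in parallel.

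\emph{Step 2: the left-hand side.} For $\varepsilon\neq0$ the left-hand series is still genuinely infinite in $m$. Expanding each Pochhammer ratio in powers of $\varepsilon$ by the Pochhammer-symbol calculus developed in \cite{i4} turns it into a generating function whose $\varepsilon$-coefficients are $\mathbb{Q}$-linear combinations of special values of MPLs at $z=-1$: the iterated logarithmic derivatives of the integer-shifted Pochhammers feed in exactly the nested sums $\sum_{0<n_1<\cdots<n_{r-i}<m}1/(n_1\cdots n_{r-i})$ against $(-1)^m m^{-(2s+i)}$, producing the height-one alternating values $\zeta_{-}(\{1\}^{r-i},2s+i)$ with the binomial weights of Step~1. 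Reading off the coefficient of $\varepsilon^{r}$ gives the sum (3) --- resp. (4) for the second specialisation --- up to an explicit nonzero rational factor.

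\emph{Step 3: the right-hand side, and conclusion.} After the collapse of Step~1, the right-hand side is a product of Gamma functions with arguments of the form (positive integer) $+$ ($\mathbb{Q}$-multiple of $\varepsilon$), times an explicit rational function of $\varepsilon$; by construction the numerator and denominator arguments have the same sum, so the Euler constant drops out of the logarithm. Hence, by $\log\Gamma(1+x)=-\gamma x+\sum_{k\ge2}(-1)^{k}\zeta(k)x^{k}/k$, this product of Gammas expands into a power series in $\varepsilon$ with coefficients that are $\mathbb{Q}$-polynomials in $\zeta(2),\zeta(3),\dots$, and multiplying by a rational function of $\varepsilon$ preserves this. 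Thus the coefficient of $\varepsilon^{r}$ on the right is a $\mathbb{Q}$-polynomial in the Riemann zeta values, and equating it with the result of Step~2 proves the theorem.

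\emph{The main obstacle} is Step~1: one must exhibit specialisations that \emph{simultaneously} keep the left-hand series convergent with the precise height-one content of weight $2s+r$, make the right-hand $s$-fold sum collapse to Gamma factors of integer arguments times a rational function of $\varepsilon$, and respect the technical inequalities of Theorem~A; and then, in Step~2, carry the Pochhammer-symbol expansion far enough to pin down the binomial coefficients of (3) and (4) exactly. Once the specialisations are in hand, Steps~2 and~3 are a long but essentially routine application of the expansion calculus of \cite{i4}.
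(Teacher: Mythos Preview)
Your outline has a genuine gap at the point you yourself flag as ``the main obstacle'': the claim in Step~1 that one can choose a specialisation of Theorem~A under which the $s$-fold sum on the right collapses to a closed form in Gamma functions (times a rational function of $\varepsilon$). You do not exhibit such a specialisation, and for general $s\ge 2$ no such collapse occurs under the natural choices. Concretely, with $a=2\alpha$, $b_i=c_i=\alpha$ (the specialisation actually used for~(4)) the right-hand side of Theorem~A is
\[
\frac{\Gamma(\alpha)^2}{\Gamma(2\alpha-1)}
\sum_{0\le m_1\le\cdots\le m_s<\infty}\prod_{i=1}^{s}\frac{1}{(m_i+\alpha)^2},
\]
an honest $s$-fold multiple zeta-star type sum, not a single ${}_3F_2(1)$ or ${}_2F_1(-1)$. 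Differentiating $r$ times at $\alpha=1$ produces the symmetric sums $\sum_{r_1+\cdots+r_s=r}\bigl(\prod_j(r_j+1)\bigr)\zeta^{\star}(r_1+2,\ldots,r_s+2)$ (and similarly $\sum\zeta^{\star}(r_1+2,\ldots,r_s+2)$ for~(3)). These are \emph{not} obviously $\mathbb{Q}$-polynomials in $\zeta(k)$; turning them into such is a separate step that your proposal does not supply.

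The paper closes this gap with an additional ingredient you do not mention: Hoffman's symmetric-function identity \cite[Theorem~2.1]{h}. Because the MZSV sums arising on the right are symmetric in $(r_1,\ldots,r_s)$, one can average over $\mathfrak{S}_s$ at no cost and then invoke Hoffman's evaluation of $\sum_{\sigma\in\mathfrak{S}_s}\zeta^{\star}(k_{\sigma(1)},\ldots,k_{\sigma(s)})$ as a $\mathbb{Q}$-polynomial in single zeta values (this is the content of Lemma~2). The Gamma prefactor $\Gamma(\alpha)^2/\Gamma(2\alpha-1)$ is handled exactly as in your Step~3. So your Steps~2 and~3 are in the right spirit, but the proof does not go through by collapsing the right side to Gammas alone; the missing idea is the symmetrisation~$+$~Hoffman step applied to the residual MZSV sums.
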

\begin{remark}
I first note the following two evaluations: 
\begin{equation}
\begin{aligned}
\zeta_{-}(\{1\}^{m},2)
=
&(-1)^{m}\zeta(m+2)+(-1)^{m}2\frac{(\log2)^{m+2}}{(m+2)!}\\
&+(-1)^{m+1}\sum_{k=0}^{m+2}\mathrm{Li}_k(1/2)\frac{(\log2)^{m+2-k}}{(m+2-k)!}
\end{aligned}
\end{equation}
($m\in\mathbb{Z}_{\ge0}$; Borwein, Bradley, and 
Broadhurst \cite[Identity (69), Section 6]{bbb}), where $\mathrm{Li}_{k}(z):=\sum_{m=1}^{\infty}z^{m}m^{-k}$ is the polylogarithm; and 
\begin{equation}
\begin{aligned}
\zeta_{-}(1,3)
=
&-2\mathrm{Li}_4(1/2)-\frac{1}{12}(\log2)^4+\frac{15}{8}\zeta(4)\\
&-\frac{7}{4}\zeta(3)\log2+\frac{1}{2}\zeta(2)(\log2)^2
\end{aligned}
\end{equation}
(Borwein, Borwein, and Girgensohn \cite[p.~291, line 10]{bbg}). 
I note that, while the evaluations (5) and (6) contain $\mathrm{Li}_{k}(1/2)$ ($k\in\mathbb{Z}_{\ge1}$; $\log2=\mathrm{Li}_{1}(1/2)$), 
the evaluations of the sums (3) and (4) in Theorem 1 do not contain 
$\mathrm{Li}_{k}(1/2)$. In fact, the sums (3) and (4) can be evaluated only 
by $\zeta(k)$ ($=\mathrm{Li}_{k}(1)$). From this fact, I think that the sums (3) and (4) are particularly interesting. Each of $\zeta_{-}(\{1\}^{m},n+1)$'s ($m,n\in\mathbb{Z}_{\ge0}$) can be evaluated 
by alternating Euler sums (see Borwein et al. \cite[Identity (28)]{bbb} and \cite[Theorem 9.3]{bbbl}). 
For MZVs, it is known that $\zeta(\{1\}^{m},n+2)$ ($m,n\in\mathbb{Z}_{\ge0}$) 
can be expressed in a $\mathbb{Q}$-polynomial of $\zeta(k)$ 
$(k\in\mathbb{Z}_{\ge2})$ (see \cite{ohza2001} and \cite{z}). 
\end{remark}
\par 
To prove Theorem 1, I need two lemmas. 
\begin{lemma}
The following two identities hold$:$
\par
$(i)$ 
\begin{equation}
\begin{aligned}
&\sum_{i=0}^{r}(-1)^{r-i}\left\{\binom{s-2+i}{i}+\binom{s-1+i}{i}\right\}
\zeta_{-}(\{1\}^{r-i},2s+i)\\
=
&\sum_{\begin{subarray}{c}
r_1+\cdots+r_s= r\\
       r_i\in\mathbb{Z}_{\ge0}
      \end{subarray}}
\zeta^{\star}(\{r_i+2\}_{i=1}^{s})
\end{aligned}
\end{equation}
for all $r\in\mathbb{Z}_{\ge0}$, $s\in\mathbb{Z}_{\ge1}.$
\par
$(ii)$ 
\begin{equation}
\begin{aligned}
&\sum_{i=0}^{r}(-1)^{r-i}2^{r-i+1}\left\{\binom{2s+i-1}{i}-\binom{2s+i-1}{i-1}\right\}\zeta_{-}(\{1\}^{r-i},2s+i)\\
=
&
\sum_{i=0}^{r}
\frac{(-1)^i}{i!}\frac{\mathrm{d}^{i}}{\mathrm{d}\alpha^{i}}
\left(\frac{\Gamma(\alpha)^2}{\Gamma(2\alpha-1)}\right){\Bigl|}_{\alpha=1}\\
&\times
\sum_{\begin{subarray}{c}
r_1+\cdots+r_s= r-i\\
       r_j\in\mathbb{Z}_{\ge0}
      \end{subarray}}
\left\{\prod_{j=1}^{s}(r_j+1)\right\}
\zeta^{\star}(\{r_j+2\}_{j=1}^{s})
\end{aligned}
\end{equation}
for all $r\in\mathbb{Z}_{\ge0}$, $s\in\mathbb{Z}_{\ge1}$, where $\Gamma(z)$ is 
the gamma function.
\end{lemma}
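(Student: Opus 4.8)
The plan is to derive (7) and (8) from Theorem~A by specializing its parameters $b_i,c_i$ (keeping the same $s$) to explicit affine functions of an auxiliary variable $u$, passing to the limit $a\to0$, and comparing coefficients of $u^{r}$. For part $(ii)$ I would take $a=0$ and $b_{s+1}=c_{s+1}=1-\alpha$ with $u:=1-\alpha$, so that the gamma prefactor of Theorem~A is exactly $\Gamma(\alpha)^2/\Gamma(2\alpha-1)=\Gamma(1-u)^2/\Gamma(1-2u)$; for part $(i)$ a similar but different explicit choice of all the $b_i,c_i$. For these parameters the convergence conditions of Theorem~A hold for $u$ in a punctured neighbourhood of $0$, and the limit $a\to0$ is legitimate by the estimates of Krattenthaler and Rivoal, so Theorem~A becomes an identity of power series in $u$, of which the lemma is the assertion about the coefficient of $u^{r}$.

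First I would evaluate the left-hand side. Since $\lim_{a\to0}\frac{(a)_m(a/2+1)_m}{(a/2)_m}=2\,m!$ for $m\ge1$ and $=1$ for $m=0$, the very-well-poised ${}_{2s+4}F_{2s+3}(-1)$ reduces to $1+2\sum_{m\ge1}(-1)^m\prod_{i=1}^{s+1}\frac{(b_i)_m(c_i)_m}{(1-b_i)_m(1-c_i)_m}$. With the chosen parameters the product of Pochhammer ratios equals $R(u/m)$, for a fixed rational function $R$ independent of $m$, times $\prod_{\ell=1}^{m-1}\bigl(1-\tfrac{\lambda u}{\ell}\bigr)=\frac{(1-\lambda u)_{m-1}}{(m-1)!}$, where $\lambda=1$ in case $(i)$ and $\lambda=2$ in case $(ii)$. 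Using $\prod_{\ell=1}^{m-1}\bigl(1-\tfrac{\lambda u}{\ell}\bigr)=\sum_{j\ge0}(-\lambda u)^{j}e_j\!\bigl(1,\tfrac12,\dots,\tfrac1{m-1}\bigr)$ (where $e_j$ is the $j$th elementary symmetric function), the identity $\sum_{m\ge1}\frac{(-1)^{m-1}}{m^{k}}e_j\!\bigl(1,\dots,\tfrac1{m-1}\bigr)=\zeta_{-}(\{1\}^{j},k)$, and the Taylor expansion of $R$, the coefficient of $u^{r}$ on the left-hand side of Theorem~A becomes the required combination of the $\zeta_{-}(\{1\}^{r-i},2s+i)$; the binomials $\binom{s-2+i}{i}+\binom{s-1+i}{i}$ in (7), respectively $2^{\,r-i+1}\{\binom{2s+i-1}{i}-\binom{2s+i-1}{i-1}\}$ in (8), are simply the Taylor coefficients of $R$ (namely $(1-x)^{-(s-1)}+(1-x)^{-s}$ in case $(i)$, and an analogous rational function in case $(ii)$), the factor $2^{r-i+1}$ combining the overall $2$ with the $(-\lambda u)^{j}$ above.

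Next I would evaluate the right-hand side. Expanding the gamma prefactor about $\alpha=1$ produces exactly the coefficients $\frac{(-1)^i}{i!}\frac{\mathrm{d}^{i}}{\mathrm{d}\alpha^{i}}\bigl(\Gamma(\alpha)^2/\Gamma(2\alpha-1)\bigr)|_{\alpha=1}$ appearing in (8) (in case $(i)$ the analogous prefactor will be absorbed by the simplification below). For the $s$-fold sum $\sum_{l_1,\dots,l_s\ge0}\prod_{i=1}^{s}\frac{(1-b_i-c_i)_{l_i}(b_{i+1})_{L_i}(c_{i+1})_{L_i}}{l_i!\,(1-b_i)_{L_i}(1-c_i)_{L_i}}$, where $L_i=l_1+\cdots+l_i$, I would introduce $n_i:=L_i$ and simplify the Pochhammer products by the calculus of $(a)_m$: telescoping through $(x)_{L_i}=(x)_{L_{i-1}}(x+L_{i-1})_{l_i}$, together with $\frac{1}{n(n-u)}=\frac{(1)_{n-1}(1-u)_{n-1}}{(1)_n(1-u)_n}$ and $\frac{1}{(n-u)^2}=\bigl(\tfrac{(1-u)_{n-1}}{(1-u)_n}\bigr)^{2}$, so that the summand collapses to $\prod_{i=1}^{s}\frac{1}{n_i(n_i-u)}$ in case $(i)$ and to $\prod_{i=1}^{s}\frac{1}{(n_i-u)^2}$ in case $(ii)$, the range becoming $1\le n_1\le\cdots\le n_s$. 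Since $\frac{1}{n(n-u)}=\sum_{k\ge0}u^{k}n^{-k-2}$ and $\frac{1}{(n-u)^2}=\sum_{k\ge0}(k+1)u^{k}n^{-k-2}$, the $s$-fold sum equals $\sum_{j\ge0}u^{j}\sum_{r_1+\cdots+r_s=j}\zeta^{\star}(\{r_i+2\}_{i=1}^{s})$ in case $(i)$ and $\sum_{j\ge0}u^{j}\sum_{r_1+\cdots+r_s=j}\bigl(\prod_{i=1}^{s}(r_i+1)\bigr)\zeta^{\star}(\{r_i+2\}_{i=1}^{s})$ in case $(ii)$; multiplying by the gamma prefactor (a convolution of power series) and extracting the coefficient of $u^{r}$ yields the right-hand side of (7), respectively (8). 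Equating the two evaluations of Theorem~A proves the lemma.

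The step I expect to be the main obstacle is this "Pochhammer calculus": identifying the precise affine choices of $b_i,c_i$ that make the $s$-fold summand telescope exactly to $\prod_{i=1}^{s}1/(n_i(n_i-u))$, respectively $\prod_{i=1}^{s}1/(n_i-u)^2$, with a correct treatment of the boundary contributions (the tuples $(l_1,\dots,l_s)$ with some $l_i=0$, which shorten the chain $n_1\le\cdots\le n_s$) and of their interaction with the gamma prefactor, while the same choices must simultaneously force the rational function $R$ on the left-hand side to have the binomial coefficients of (7) and (8) as its Taylor coefficients. By contrast, the convergence and limit justifications, and the bookkeeping of signs and of the $m=0$ term, should be routine.
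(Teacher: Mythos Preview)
Your overall strategy—specialize Theorem~A and then extract the coefficient of $u^{r}$ (equivalently, differentiate $r$ times)—is exactly the paper's, but you have made the specialization step much harder than it needs to be, and the ``main obstacle'' you flag is an artefact of that.

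For part~(ii) the paper does \emph{not} send $a\to 0$. It takes $a=2\alpha$ and $b_i=c_i=\alpha$ for \emph{every} $i=1,\dots,s+1$, then differentiates $r$ times at $\alpha=1$. With this symmetric choice one has $1+a-b_i-c_i=1$, so $(1+a-b_i-c_i)_{l_i}=l_i!$ cancels the $l_i!$ in the denominator, and $(b_{i+1})_{L_i}(c_{i+1})_{L_i}/(1+a-b_i)_{L_i}(1+a-c_i)_{L_i}=\bigl(\alpha/(L_i+\alpha)\bigr)^{2}$; the $s$-fold sum collapses \emph{immediately} to $\sum_{0\le m_1\le\cdots\le m_s}\prod_i(m_i+\alpha)^{-2}$ with no telescoping, no boundary analysis, and no separate treatment of tuples with $l_i=0$. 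The gamma prefactor is already $\Gamma(\alpha)^2/\Gamma(2\alpha-1)$ up to an elementary factor absorbed on the other side. On the left, the very-well-poised series becomes $2\sum_{m\ge0}(-1)^m(2\alpha-1)_{m+1}/\bigl(m!\,(m+\alpha)^{2s+1}\bigr)$, and differentiating via $\tfrac{1}{r!}\tfrac{d^r}{dw^r}(w)_m=(w)_m\sum_{0\le m_1<\cdots<m_r<m}\prod(m_i+w)^{-1}$ produces the $\zeta_{-}(\{1\}^{r-i},2s+i)$ directly. So the ``Pochhammer calculus'' you anticipate is a one-line substitution, not an obstacle, once you choose all $b_i,c_i$ equal rather than fixing only $b_{s+1},c_{s+1}$ and leaving the rest unspecified.

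For part~(i) the paper simply quotes the case $\alpha=\beta=1$ of an identity proved in the author's earlier paper (Theorem~2.13\,(iii) of \cite{i4}), itself derived from Theorem~A by an analogous explicit specialization; your sketch for (i) never names the parameters and so is not yet a proof. In short: your plan is on the right track, but the missing ingredient is precisely the clean symmetric specialization $a=2\alpha$, $b_i=c_i=\alpha$ (and its analogue for~(i)); once you use it, the limit $a\to0$, the rational function $R$, and the telescoping you worry about all disappear.
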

\begin{proof}
Taking $\alpha=\beta=1$ in \cite[Theorem 2.13 (iii)]{i4}, we have (7).
The proof of (8) is as follows: Taking $a=2\alpha$ and $b_i=c_i=\alpha$ ($i = 1, \ldots, s+1$) ($s\in\mathbb{Z}_{\ge1}$, $\alpha\in\mathbb{C}$ with 
$s+1/2>\mathrm{Re}(\alpha)>1/2$) in Theorem A, we have 
\begin{equation*}
\begin{aligned}
&2\sum_{m=0}^{\infty}\frac{(2\alpha-1)_{m+1}}{m!}\frac{(-1)^m}{(m+\alpha)^{2s+1}}\\
=
&
\frac{\Gamma(\alpha)^2}{\Gamma(2\alpha-1)}
\sum_{0{\le}m_1{\le}\cdots{\le}m_s<\infty}
\prod_{i=1}^{s}
\frac{1}{(m_i+\alpha)^2}.
\end{aligned}
\end{equation*}
Differentiating both sides of this identity $r$ times at $\alpha=1$ and using the identity
\begin{equation*} 
\frac{1}{r!}\frac{\mathrm{d}^r}{\mathrm{d}w^r}(w)_m
=(w)_m
\sum_{0{\le}m_1<\cdots<m_r<m}
\prod_{i=1}^{r}\frac{1}{m_i+w}
\end{equation*}
($r,m\in\mathbb{Z}_{\ge0}$), we obtain (8).
\end{proof}
\begin{lemma}
Let $k\in\mathbb{Z}_{\ge1}$, $q,r\in\mathbb{Z}_{\ge0}$, $s\in\mathbb{Z}_{\ge2}$, and let 
$f(r_1,\ldots,r_k)$ be a symmetric function with $k$ variables. 
Then the sum
\begin{equation}
\sum_{\begin{subarray}{c}
r_1+\cdots+r_k= r\\
       r_i\in\mathbb{Z}_{\ge0}
      \end{subarray}}
f(r_1,\ldots,r_k)
\zeta^{\star}(qr_1+s,\ldots,qr_k+s)
\end{equation}
can be expressed in a $\mathbb{Q}$-linear combination of 
$f(r_1,\ldots,r_k)\{\prod_{i=1}^{\ell}\zeta(m_i)\}$ $(m_i\in\mathbb{Z}_{\ge2})$.
\end{lemma}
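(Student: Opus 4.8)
The plan is to convert the symmetry hypothesis on $f$ into a genuine averaging over $S_k$ acting on the arguments of the multiple zeta-star values, and then apply the symmetric sum formula for MZSVs (due to Hoffman), which collapses a full symmetrization of an MZSV of length $k$ into a $\mathbb{Q}$-linear combination of products of Riemann zeta values.

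\textbf{Step 1 (symmetrization).} The summation region $\{(r_1,\dots,r_k)\in\mathbb{Z}_{\ge0}^{k}:r_1+\cdots+r_k=r\}$ is invariant under every coordinate permutation $\sigma\in S_k$, and $f$ is symmetric; reindexing and averaging over $S_k$ therefore rewrites the sum (9) as
\[
\frac{1}{k!}\sum_{\substack{r_1+\cdots+r_k=r\\ r_i\in\mathbb{Z}_{\ge0}}}f(r_1,\dots,r_k)\sum_{\sigma\in S_k}\zeta^{\star}\!\left(qr_{\sigma(1)}+s,\dots,qr_{\sigma(k)}+s\right).
\]
Since $s\ge 2$, every argument $qr_j+s$ is $\ge 2$, so every permuted MZSV appearing here converges (an MZSV $\zeta^{\star}(k_1,\dots,k_n)$ converges as soon as $k_n\ge 2$); in particular the inner symmetric sum is well defined.

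\textbf{Step 2 (symmetric sum formula) and Step 3 (conclusion).} For positive integers $a_1,\dots,a_k$ all $\ge 2$, Hoffman's symmetric sum formula for multiple zeta-star values gives
\[
\sum_{\sigma\in S_k}\zeta^{\star}\!\left(a_{\sigma(1)},\dots,a_{\sigma(k)}\right)=\sum_{\pi}\ \prod_{B\in\pi}(|B|-1)!\ \zeta\!\left(\sum_{j\in B}a_j\right),
\]
the outer sum running over all set partitions $\pi$ of $\{1,\dots,k\}$. Taking $a_j=qr_j+s$ turns the inner sum in Step 1 into $\sum_{\pi}\prod_{B\in\pi}(|B|-1)!\,\zeta\!\left(q\sum_{j\in B}r_j+|B|s\right)$, where each argument is $\ge |B|s\ge s\ge 2$, hence an honest Riemann zeta value. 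Substituting back, (9) becomes
\[
\frac{1}{k!}\sum_{\pi}\left(\prod_{B\in\pi}(|B|-1)!\right)\sum_{\substack{r_1+\cdots+r_k=r\\ r_i\in\mathbb{Z}_{\ge0}}}f(r_1,\dots,r_k)\prod_{B\in\pi}\zeta\!\left(q\sum_{j\in B}r_j+|B|s\right),
\]
which is precisely a $\mathbb{Q}$-linear combination of sums of the asserted form (the number $\ell$ of factors equals $|\pi|$, and each $m_i\ge 2$). The degenerate case $k=1$ is just $\zeta^{\star}(qr+s)=\zeta(qr+s)$ and is covered by the same display.

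\textbf{The main obstacle.} There is no analytic difficulty; the one point that needs care is that all the index-set manipulations stay inside the convergent range, i.e. that $s\ge 2$ really does force every permuted $\zeta^{\star}$ and every resulting single $\zeta$ to have all arguments $\ge 2$, so that no divergent symbol is ever written down. The genuine content is the symmetric sum formula quoted in Step 2; if one prefers a self-contained argument rather than citing Hoffman, that formula is proved by induction on $k$ using the stuffle (harmonic) product together with the expansion of each $\zeta^{\star}$ into ordinary MZVs, and this induction is where essentially all the work lies.
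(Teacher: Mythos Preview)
Your proof is correct and follows essentially the same route as the paper: symmetrize over $S_k$ using the symmetry of $f$ and the $S_k$-invariance of the summation domain, apply Hoffman's symmetric sum theorem to the inner $\sum_{\sigma}\zeta^{\star}(\cdots)$, and divide by $k!$. The only difference is cosmetic---you write out the set-partition formula explicitly and note the convergence check $s\ge 2$, whereas the paper simply cites \cite[Theorem~2.1]{h}.
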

\begin{proof}
Let $\mathfrak{S}_k$ be the symmetric group of degree $k$. Then we have 
\begin{equation}
\begin{aligned}
&\sum_{\sigma\in\mathfrak{S}_k}
\sum_{\begin{subarray}{c}
r_{\sigma(1)}+\cdots+r_{\sigma(k)}= r\\
       r_{\sigma(i)}\in\mathbb{Z}_{\ge0}
      \end{subarray}}
f(r_{\sigma(1)},\ldots,r_{\sigma(k)})
\zeta^{\star}(qr_{\sigma(1)}+s,\ldots,qr_{\sigma(k)}+s)\\
=
&\sum_{\begin{subarray}{c}
r_1+\cdots+r_k= r\\
       r_i\in\mathbb{Z}_{\ge0}
      \end{subarray}}
f(r_1,\ldots,r_k)
\sum_{\sigma\in\mathfrak{S}_k}
\zeta^{\star}(qr_{\sigma(1)}+s,\ldots,qr_{\sigma(k)}+s)
\end{aligned}
\end{equation}
for $k\in\mathbb{Z}_{\ge1}$, $q,r\in\mathbb{Z}_{\ge0}$, $s\in\mathbb{Z}_{\ge2}$. 
Taking $i_j=qr_j+s$ ($j=1,\cdots,k$) in \cite[Theorem 2.1]{h}, 
we see that the inner sum on the right-hand side of (10) can be 
written as a $\mathbb{Q}$-polynomial of $\zeta(m)$ ($m\in\mathbb{Z}_{\ge2}$). 
Thus the right-hand side becomes a $\mathbb{Q}$-linear combination of 
$f(r_1,\ldots,r_k)\{\prod_{i=1}^{\ell}\zeta(m_i)\}$. 
On the other hand, since $|\mathfrak{S}_k|=k!$, 
where $|\mathfrak{S}_k|$ denotes the number of elements of $\mathfrak{S}_k$, 
the left-hand side of (10) can be rewritten as
\begin{equation*}
k!
\sum_{\begin{subarray}{c}
r_1+\cdots+r_k= r\\
       r_i\in\mathbb{Z}_{\ge0}
      \end{subarray}}
f(r_1,\ldots,r_k)
\zeta^{\star}(qr_1+s,\ldots,qr_k+s),
\end{equation*}
which is (9). This completes the proof.
\end{proof}
\begin{remark}
Hoffman proved his evaluation \cite[Theorem 2.1]{h} explicitly; 
therefore the sum (9) can also be evaluated like that.
\end{remark}
\begin{proof}[Proof of Theorem $1$]
The case $q=1$, $s=2$, $f(r_1,\ldots,r_k)=1$ of Lemma 2 shows that 
the right-hand side of (7) 
can be expressed in a $\mathbb{Q}$-linear combination of $\prod_{i=1}^{\ell}\zeta(m_i)$, and this proves the assertion for (3). 
Similarly, the case $q=1$, $s=2$, $f(r_1,\ldots,r_k)=\prod_{i=1}^{k}(r_i+1)$ 
of Lemma 2 shows that the inner sum on the right-hand side of (8) 
can be expressed in a $\mathbb{Q}$-linear combination of $\{\prod_{i=1}^{k}(r_i+1)\}\{\prod_{i=1}^{\ell}\zeta(m_i)\}$. As regards the differential coefficient 
\begin{equation} 
\frac{1}{i!}\frac{\mathrm{d}^{i}}{\mathrm{d}\alpha^{i}}
\left(\frac{\Gamma(\alpha)^2}{\Gamma(2\alpha-1)}\right){\Bigl|}_{\alpha=1}
\quad\quad (i\in\mathbb{Z}_{\ge0}),
\end{equation}
it also has such an expression. 
Indeed, using the expansion of the gamma function
\begin{equation*}
\Gamma(\alpha)=\exp\left(-\gamma(\alpha-1)+\sum_{n=2}^{\infty}(-1)^n\frac{\zeta(n)}{n}(\alpha-1)^{n}\right)
\end{equation*}
for all $\alpha\in\mathbb{C}$ such that $|\alpha-1|<1$, where $\gamma$ is Euler's constant (see, e.g., \cite[p.~38]{ak}, \cite[Chapter XII]{whiwat}, and \cite{ohza2001}), we have 
\begin{equation*}
\frac{\Gamma(\alpha)^2}{\Gamma(2\alpha-1)}
=\exp\left(\sum_{n=2}^{\infty}(-1)^{n+1}\frac{\zeta(n)}{n}(2^n-2)(\alpha-1)^{n}\right)
\end{equation*}
for $|\alpha-1|<1/2$, and this gives an expression of (11) by 
a $\mathbb{Q}$-polynomial of $\zeta(k)$. 
Therefore, combining all the expressions above of the right-hand side of (8), 
we obtain a proof of the assertion for (4).
\end{proof}
Using Theorem A, I can prove also the following evaluation, which is similar to (7):
\begin{theorem}
The identity
\begin{equation}
\begin{aligned}
&\sum_{i=0}^{r}\left\{\binom{s-2+i}{i}+\binom{s-1+i}{i}\right\}
\zeta_{-}^{\star}(\{1\}^{r-i},2s+i)\\
=
&\sum_{\begin{subarray}{c}
r_1+\cdots+r_s= r\\
       r_i\in\mathbb{Z}_{\ge0}
      \end{subarray}}
(r_1+1)\zeta^{\star}(\{r_i+2\}_{i=1}^{s})
\end{aligned}
\end{equation}
holds for all $r\in\mathbb{Z}_{\ge0}$, $s\in\mathbb{Z}_{\ge1}$.
\end{theorem}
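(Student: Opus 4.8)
The plan is to push the argument behind (7) one step further, in the same spirit in which the proof of Lemma 1 (ii) refines that of Lemma 1 (i). First I would return to Theorem A and take a specialization of $a$, $b_i$, $c_i$ in the same family as the ones used for (7) and for Lemma 1 (ii) — that is, with $a$ and the $b_i$, $c_i$ built from one (or two) auxiliary parameters, so that the left-hand side of Theorem A becomes a single ${}_{p+1}F_{p}$ at $-1$ carrying a parameter $\alpha$ (and still a surviving factor $(-1)^{m}$), while the right-hand multiple series telescopes — but now I would choose the specialization so as to single out the \emph{innermost} block $i=1$: the aim is an auxiliary identity whose right-hand side is $\Gamma(\cdot)/\Gamma(\cdot)$ times $\sum_{0\le m_1\le\cdots\le m_s<\infty}\frac{1}{(m_1+\alpha)^{2}}\prod_{j=2}^{s}\frac{1}{(m_j+\alpha)(m_j+1)}$, in which only the smallest index carries a squared $\alpha$-denominator. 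The occurrence of the partial sums $L_i=l_1+\cdots+l_i$ in Theorem A already forces the chain $m_1\le\cdots\le m_s$, which is what will produce the multiple zeta-\emph{star} values on the right of (13); if no single admissible specialization of Theorem A gives this shape verbatim, I would instead start from the two-parameter identity underlying \cite[Theorem 2.13]{i4} and differentiate once in the parameter attached to the first block.

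Next I would differentiate both sides of that auxiliary identity $r$ times in $\alpha$ and set $\alpha=1$, using Leibniz's rule together with the Pochhammer-derivative identity $\frac{1}{r!}\frac{\mathrm{d}^{r}}{\mathrm{d}w^{r}}(w)_{m}=(w)_{m}\sum_{0\le m_1<\cdots<m_r<m}\prod_{i=1}^{r}\frac{1}{m_i+w}$ recalled in the proof of Lemma 1. On the multiple-series side the $r$ derivatives distribute over the $s$ blocks: the squared $\alpha$-denominator on the first index contributes the factor $r_1+1$, each remaining block contributes no extra factor, and the chain structure reassembles the $\zeta^{\star}$'s, so this side collapses to $\sum_{r_1+\cdots+r_s=r}(r_1+1)\,\zeta^{\star}(\{r_i+2\}_{i=1}^{s})$, the overall sign matching the one coming from the other side. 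On the ${}_{p+1}F_{p}$-at-$-1$ side the computation is formally the one that yields the left-hand side of (7): the factor $(-1)^{m}$ survives, the Pochhammer derivative unfolds into $\{1\}$-strings, and the boundary terms — those in which the largest chain variable equals the last summation variable — merge with the shifted-power terms to give $\sum_{i=0}^{r}\bigl\{\binom{s-2+i}{i}+\binom{s-1+i}{i}\bigr\}\,\zeta_{-}^{\star}(\{1\}^{r-i},2s+i)$, with exactly the binomial coefficients of (7) because the last argument $2s+i$ is built up by the same cancellation mechanism.

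The main obstacle will be the first step: producing an admissible specialization (of Theorem A, or of the two-parameter identity of \cite{i4}) that simultaneously breaks the permutation symmetry of the $s$ blocks in the required way, still meets the convergence hypotheses of Theorem A in a suitable domain around $\alpha=1$, and has a $\Gamma$-prefactor whose Taylor coefficients at $\alpha=1$ contribute nothing beyond a rational constant, so that no spurious $\zeta(k)$ appears on the right of (13). Once that auxiliary identity is isolated, the remaining work — carrying out the $r$-fold differentiation and collecting terms, in particular verifying that the non-$\{1\}$-shaped contributions on the hypergeometric side regroup precisely into the alternating multiple zeta-star values $\zeta_{-}^{\star}(\{1\}^{r-i},2s+i)$ — is routine, if lengthy, bookkeeping of the same kind as in the proof of Lemma 1.
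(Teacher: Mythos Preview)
Your overall plan matches the paper's proof: one specializes Theorem~A so that the multiple-series side becomes
\[
\sum_{0\le m_1\le\cdots\le m_s<\infty}\frac{1}{(m_1+\alpha)^{2}}\prod_{i=2}^{s}\frac{1}{(m_i+\alpha)(m_i+1)},
\]
then differentiates $r$ times at $\alpha=1$. The paper's concrete choice is $a=\alpha+1$, $b_1=c_1=1$, $b_i=\alpha$, $c_i=1$ for $i=2,\ldots,s+1$; with this the $\Gamma$-prefactor reduces to a rational function of $\alpha$, so your worry about spurious $\zeta(k)$'s is automatically resolved.

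There is, however, one genuine slip in your write-up. You invoke the identity
$\frac{1}{r!}\frac{\mathrm{d}^{r}}{\mathrm{d}w^{r}}(w)_{m}=(w)_{m}\sum_{0\le m_1<\cdots<m_r<m}\prod_{i}\frac{1}{m_i+w}$
from the proof of Lemma~1, which concerns $(w)_m$ in the \emph{numerator} and produces \emph{strict} chains $0\le m_1<\cdots<m_r<m$; that is precisely what yields $\zeta_{-}$ in (7) and (8). Here the hypergeometric side carries $m!/(\alpha)_{m+1}$, i.e.\ a Pochhammer factor in the \emph{denominator}, and the identity you actually need is
\[
\frac{(-1)^r}{r!}\frac{\mathrm{d}^r}{\mathrm{d}w^r}\!\left(\frac{1}{(w)_{m+1}}\right)
=\frac{1}{(w)_{m+1}}\sum_{0\le m_1\le\cdots\le m_r\le m}\prod_{i=1}^{r}\frac{1}{m_i+w},
\]
with non-strict inequalities. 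This is not cosmetic: it is exactly what turns the $\{1\}$-strings on the left into $\zeta_{-}^{\star}(\{1\}^{r-i},2s+i)$ rather than $\zeta_{-}(\{1\}^{r-i},2s+i)$, and hence what distinguishes this theorem from (7). Once you replace the derivative formula accordingly, the bookkeeping goes through as you describe, and no signs $(-1)^{r-i}$ survive on the left---again because of the $(-1)^r$ in the reciprocal-Pochhammer formula.
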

\begin{proof}
Taking $a=\alpha+1$, $b_1=c_1=1$, $b_i=\alpha$, $c_i=1$ ($i = 2, \ldots, s+1$) ($s\in\mathbb{Z}_{\ge1}$, $\alpha\in\mathbb{C}$ with $\mathrm{Re}(\alpha)>0$) 
in Theorem A, we have 
\begin{equation*}
\begin{aligned}
&\sum_{m=0}^{\infty}\frac{m!}{(\alpha)_{m+1}}\frac{2m+\alpha+1}{(m+\alpha)^{s}(m+1)^{s}}(-1)^m\\
=
&\sum_{0\le m_1\le \cdots{\le}m_s<\infty}
\frac{1}{(m_1+\alpha)^2}
\left\{\prod_{i=2}^{s}\frac{1}{(m_i+\alpha)(m_i+1)}\right\}.
\end{aligned}
\end{equation*}
Differentiating both sides of this identity $r$ times at $\alpha=1$ and using the identity
\begin{equation*} 
\frac{(-1)^r}{r!}\frac{\mathrm{d}^r}{\mathrm{d}w^r}\left(\frac{1}{(w)_{m+1}}\right)
=\frac{1}{(w)_{m+1}}
\sum_{0{\le}m_1{\le}\cdots{\le}m_r{\le}m}
\prod_{i=1}^{r}\frac{1}{m_i+w}
\end{equation*}
($r,m\in\mathbb{Z}_{\ge0}$), we obtain (12).
\end{proof}
\begin{remark}
(i) Taking $\alpha=\beta=\gamma=1$ in \cite[Theorem 2.13 (ii)]{i4}, 
I have another expression of the left-hand side of (12): 
\begin{equation*}
\textrm{The left-hand side of (12)}
=
\sum_{\begin{subarray}{c}
r_1+\cdots+r_s= r\\
       r_i\in\mathbb{Z}_{\ge0}
      \end{subarray}}
\zeta^{\star}(\{\{1\}^{r_i}, 2\}_{i=1}^{s})
\end{equation*}
($r\in\mathbb{Z}_{\ge0}$, $s\in\mathbb{Z}_{\ge1}$), where 
$\{\{1\}^{r_i}, 2\}_{i=1}^{s}:=\{1\}^{r_1}, 2,\ldots,\{1\}^{r_i}, 2,\ldots,\{1\}^{r_s}, 2$. 
\par 
(ii) For $s=1$, the identities (7) and (12) become 
\begin{equation}
\sum_{i=0}^{r}(-1)^{r-i}
(\delta_{i0}+1)\zeta_{-}(\{1\}^{r-i},2+i)
=
\zeta(r+2),
\end{equation}
\begin{equation}
\sum_{i=0}^{r}(\delta_{i0}+1)\zeta_{-}^{\star}(\{1\}^{r-i},2+i)
=
(r+1)\zeta(r+2)
\end{equation}
($r\in\mathbb{Z}_{\ge0}$), respectively, where $\delta_{ij}$ denotes 
Kronecker's delta, i.e.,  $\delta_{ij}=1$ if $i=j$ and $\delta_{ij}=0$ if $i{\neq}j$. 
I note that there is a similarity between (13), (14) and the identity for MZSVs of Aoki and Ohno \cite[Corollary 1]{ao}.
\end{remark}
\subsection{Application 2}
As shown in \cite{i4}, the following identity can be derived from Theorem A: 
\begin{equation}
\begin{aligned}
&\sum_{i=0}^{k}2^{k-i}
\sum_{\begin{subarray}{c}
k_1+\cdots+k_s=k-i\\
k_j\in\mathbb{Z}_{\ge0}
\end{subarray}}
\zeta^{\star}(i+k_1+2,\{k_j+2\}_{j=2}^{s})\\
=
&2^{1+k}\binom{k+s-1}{k}(1-2^{1-k-2s})\zeta(k+2s)
\end{aligned}
\end{equation}
for all $k\in\mathbb{Z}_{\ge0}$, $s\in\mathbb{Z}_{\ge1}$. (For details, 
see \cite[Theorem 2.16 and its proof, Remark 8]{i4}.) 
Using this identity, I can prove the following evaluation of MZSVs:
\begin{theorem}
The identity
\begin{equation}
\begin{aligned}
&\zeta^{\star}(4,\{2\}^{s-1})\\
=
&c(s)\zeta^{\star}(\{2\}^{s+1})
-\frac{2}{3}\sum_{\begin{subarray}{c}
k_1+k_2+k_3=s-2\\
k_j\in\mathbb{Z}_{\ge0}
\end{subarray}}
(2+\delta_{0k_1})\zeta^{\star}(\{2\}^{k_1},3,\{2\}^{k_2},3,\{2\}^{k_3})
\end{aligned}
\end{equation}
holds for all $s\in\mathbb{Z}_{\ge1}$, where $\delta_{ij}$ denotes Kronecker's delta defined under 
$(14)$ and 
\begin{equation}
\begin{aligned}
c(s)
:=
\frac{2s(s+1)}{3}+\frac{2}{3}\sum_{i=2}^{s+1}\binom{2(s+1)}{2i}
\frac{B_{2(s+1-i)}B_{2i}}{B_{2(s+1)}}
\frac{(1-2^{1-2i})}{(1-2^{1-2(s+1)})}
\end{aligned}
\end{equation}
$(s\in\mathbb{Z}_{\ge1})$. Here $B_n$ is the $n$-th Bernoulli number. 
\end{theorem}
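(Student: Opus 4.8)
The plan is to apply identity (16) with $k=2$ and then isolate $\zeta^{\star}(4,\{2\}^{s-1})$ from the resulting weight-$(2s+2)$ relation. With $k=2$, every multiple zeta-star value on the left-hand side of (16) has weight $k+2s=2s+2$ and exceeds the all-$2$'s tuple $\{2\}^{s}$ by a total excess of exactly $2$ distributed over its entries, so the only tuples that can occur are $\{2\}^{s}$ with one entry raised to $4$ (excess $2$ in a single slot) or with two entries raised to $3$ (excess $1$ in two slots). Tracking the three summands $i=0,1,2$ of (16) with $k=2$, and recalling that $i$ inflates only the first entry, I would compute the coefficient of each such value: the term $\zeta^{\star}(4,\{2\}^{s-1})$ (a $4$ in slot $1$) occurs with coefficient $2^{2}+2^{1}+2^{0}=7$; a $4$ in a slot $p\ge 2$ occurs with coefficient $2^{2}=4$ (only from $i=0$); and a pair of $3$'s in slots $p<q$ occurs with coefficient $4$ when $2\le p$ and with coefficient $4+2=6$ when $p=1$, the extra $2$ coming from the $i=1$ term, which can only seat a $3$ in slot $1$.

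Set $T_s:=\sum_{p=1}^{s}\zeta^{\star}(\{2\}^{p-1},4,\{2\}^{s-p})$ and let $D:=\sum_{k_1+k_2+k_3=s-2}(2+\delta_{0k_1})\zeta^{\star}(\{2\}^{k_1},3,\{2\}^{k_2},3,\{2\}^{k_3})$ be the sum on the right-hand side of (17). In $D$ the weight $2+\delta_{0k_1}$ equals $3$ precisely when $k_1=0$, i.e.\ when the first $3$ sits in slot $1$, and $2$ otherwise, so the computation above shows that the ``double-$3$'' part of the left-hand side of (16) with $k=2$ is exactly $2D$. Writing $R:=2^{3}\binom{s+1}{2}(1-2^{-1-2s})\zeta(2s+2)$ for the right-hand side of (16) with $k=2$, I would thus obtain
\[
7\,\zeta^{\star}(4,\{2\}^{s-1})+4\bigl(T_s-\zeta^{\star}(4,\{2\}^{s-1})\bigr)+2D=R ,
\]
that is, $\zeta^{\star}(4,\{2\}^{s-1})=\tfrac13(R-4T_s)-\tfrac23 D$. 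Comparing with (17), the theorem is reduced to the single identity $c(s)\,\zeta^{\star}(\{2\}^{s+1})=\tfrac13(R-4T_s)$.

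It then remains to evaluate $T_s$. Writing $\zeta^{\star}(\{2\}^{p-1},4,\{2\}^{s-p})=\sum_{1\le m_1\le\cdots\le m_s}m_p^{-2}\prod_{i=1}^{s}m_i^{-2}$ and summing over $p$ gives $T_s=\sum_{1\le m_1\le\cdots\le m_s}\bigl(\sum_{p=1}^{s}m_p^{-2}\bigr)\prod_{i=1}^{s}m_i^{-2}$; grouping the multiset $\{m_i\}$ by multiplicities and summing the geometric-type series produces the generating function
\[
\sum_{s\ge 0}T_s x^{2s}=\frac{\pi x}{\sin\pi x}\sum_{k\ge 1}\frac{x^{2}}{k^{2}(k^{2}-x^{2})}=\frac{\pi x}{\sin\pi x}\left(\frac{1-\pi x\cot\pi x}{2x^{2}}-\zeta(2)\right)=\Bigl(\sum_{a\ge 0}\zeta^{\star}(\{2\}^{a})x^{2a}\Bigr)\Bigl(\sum_{n\ge 1}\zeta(2n+2)x^{2n}\Bigr),
\]
using $\sum_{k\ge 1}(k^{2}-x^{2})^{-1}=(1-\pi x\cot\pi x)/(2x^{2})$, the expansion $(1-\pi x\cot\pi x)/(2x^{2})=\sum_{n\ge 0}\zeta(2n+2)x^{2n}$, and $\sum_{a\ge 0}\zeta^{\star}(\{2\}^{a})x^{2a}=\pi x/\sin\pi x$. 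Reading off the coefficient of $x^{2s}$ yields the closed form $T_s=\sum_{n=1}^{s}\zeta^{\star}(\{2\}^{s-n})\,\zeta(2n+2)$. I would then substitute the classical evaluations $\zeta^{\star}(\{2\}^{a})=2(1-2^{1-2a})\zeta(2a)$ for $a\ge 1$ (and $=1$ for $a=0$), rewrite each product $\zeta(2a)\zeta(2b)$ with $a+b=s+1$ by Euler's formula in the shape $\zeta(2a)\zeta(2b)=-\tfrac12\binom{2s+2}{2a}\frac{B_{2a}B_{2b}}{B_{2s+2}}\zeta(2s+2)$, and use $R=2s(s+1)\,\zeta^{\star}(\{2\}^{s+1})$ (which follows from the formulas for $R$ and for $\zeta^{\star}(\{2\}^{s+1})$). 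Dividing $\tfrac13(R-4T_s)$ by $\zeta^{\star}(\{2\}^{s+1})$ then yields $c(s)$: the contribution of $R$ produces the leading term $\tfrac{2s(s+1)}{3}$, and the contribution of $-4T_s$ collapses, after inserting the Bernoulli-number expressions, to the sum over $i$ with the factors $\binom{2(s+1)}{2i}B_{2(s+1-i)}B_{2i}/B_{2(s+1)}$ and $(1-2^{1-2i})/(1-2^{1-2(s+1)})$, i.e.\ to formula (18).

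I expect the main obstacle to be the uniform-in-$s$ bookkeeping of the combinatorial step: one must verify not only that no entries other than one $4$ or two $3$'s can appear, but that the coefficient of each double-$3$ value in (16) with $k=2$ equals precisely twice the weight $2+\delta_{0k_1}$ in (17); the asymmetry between a $3$ in the first slot and a $3$ in a later slot is genuine and has to be traced back to the fact that the index $i$ shifts only the first argument. The remaining work --- the coefficient extraction for $T_s$ and the passage from the products $\zeta(2a)\zeta(2b)$ to the Bernoulli form of $c(s)$ --- is routine but somewhat lengthy, and I would organize it around the symmetry $i\leftrightarrow s+1-i$ of the summand to keep the algebra under control. (The case $s=1$ is degenerate: there $D$ is the empty sum, $T_1=\zeta^{\star}(4)=\zeta(4)$, and (17) reduces to $\zeta^{\star}(4)=c(1)\,\zeta^{\star}(\{2\}^{2})$.)
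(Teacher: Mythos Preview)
Your proof is correct and follows essentially the same route as the paper: specialize identity (15) to $k=2$, decompose the left-hand side into the single-$4$ sum $T_s$, the double-$3$ sum $D$, and the residual $3\,\zeta^{\star}(4,\{2\}^{s-1})$, then rewrite $R$ and $T_s$ in terms of $\zeta^{\star}(\{2\}^{s+1})$ via $\zeta^{\star}(\{2\}^{a})=2(1-2^{1-2a})\zeta(2a)$ and Euler's formula for $\zeta(2m)$. The one genuine difference is in the evaluation of $T_s$: the paper derives $T_s=\sum_{n=1}^{s}\zeta^{\star}(\{2\}^{s-n})\zeta(2n+2)$ by a telescoping harmonic-product argument (in fact it proves the more general identity $\sum_{i=1}^{s}\zeta^{\star}(\{2\}^{i-1},t,\{2\}^{s-i})=\sum_{i=0}^{s-1}\zeta(2i+t)\zeta^{\star}(\{2\}^{s-1-i})$ for any $t\ge 2$), whereas you obtain the same formula by reading off coefficients in the generating function $\pi x/\sin\pi x$; your approach is slightly slicker here but less general, while the paper's harmonic-product identity is reusable for other values of $t$.
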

\begin{remark}
The identity (16) is an expression of $\zeta^{\star}(4,\{2\}^{s-1})$ in terms of 
the $\{2,3\}$-basis of the $\mathbb{Q}$-vector space of MZSVs. For the $\{2,3\}$-basis, see \cite[Section 3]{ikoo}. 
\end{remark}
\begin{proof}[Proof of Theorem $3$]
To prove (16), we use the case $k=2$ of (15): 
\begin{equation}
\begin{aligned}
&3\zeta^{\star}(4,\{2\}^{s-1})+2^2\sum_{i=1}^{s}\zeta^{\star}(\{2\}^{i-1},4,\{2\}^{s-i})\\
&+2\sum_{\begin{subarray}{c}
k_1+k_2+k_3=s-2\\
k_j\in\mathbb{Z}_{\ge0}
\end{subarray}}
(2+\delta_{0k_1})\zeta^{\star}(\{2\}^{k_1},3,\{2\}^{k_2},3,\{2\}^{k_3})\\
=
&{2^2}s(s+1)(1-2^{-1-2s})\zeta(2+2s)
\end{aligned}
\end{equation}
for $s\in\mathbb{Z}_{\ge1}$. By using the identity
\begin{equation}
2(1-2^{1-2s})\zeta(2s)=\zeta^{\star}(\{2\}^s)
\end{equation}
($s\in\mathbb{Z}_{\ge1}$; see \cite{zl2}), the right-hand side of (18) can be rewritten as
\begin{equation}
\begin{aligned}
{2^2}s(s+1)(1-2^{-1-2s})\zeta(2+2s)=2s(s+1)\zeta^{\star}(\{2\}^{s+1})
\end{aligned}
\end{equation}
for $s\in\mathbb{Z}_{\ge1}$. 
As regards the first sum on the left-hand side of (18), by using (19) and Euler's formula 
\begin{equation*}
\zeta(2s)=(-1)^{s-1}\frac{B_{2s}}{(2s)!}\frac{(2\pi)^{2s}}{2} 
\quad\quad (s\in\mathbb{Z}_{\ge1}), 
\end{equation*}
it can be rewritten as
\begin{equation}
\begin{aligned}
&\sum_{i=1}^{s}\zeta^{\star}(\{2\}^{i-1},4,\{2\}^{s-i})\\
=
&\sum_{i=0}^{s-1}\zeta(2(i+2))\zeta^{\star}(\{2\}^{s-1-i}) \quad\quad (\text{see the proof below})\\
=
&2\sum_{i=0}^{s-1}(1-2^{1-2(s-1-i)})\zeta(2(i+2))\zeta(2(s-1-i))\\
=
&\left(\sum_{i=0}^{s-1}
(1-2^{1-2(s-1-i)})
\frac{B_{2(i+2)}B_{2(s-1-i)}}{(2(i+2))!(2(s-1-i))!}\right)
(-1)^{s-1}\frac{(2\pi)^{2(s+1)}}{2}\\
=
&-\left(\sum_{i=0}^{s-1}
(1-2^{1-2(s-1-i)})
\frac{B_{2(i+2)}B_{2(s-1-i)}}{(2(i+2))!(2(s-1-i))!}\right)
\frac{(2(s+1))!}{B_{2(s+1)}}\zeta(2(s+1))\\
=
&-\frac{1}{2}
\left(\sum_{i=0}^{s-1}\binom{2(s+1)}{2(i+2)}\frac{B_{2(i+2)}B_{2(s-1-i)}}{B_{2(s+1)}}
\frac{(1-2^{1-2(s-1-i)})}{(1-2^{1-2(s+1)})}\right)
\zeta^{\star}(\{2\}^{s+1})
\end{aligned}
\end{equation}
for $s\in\mathbb{Z}_{\ge1}$, where $\zeta(0)=-1/2$ and $\zeta^{\star}(\emptyset)=1$. 
Therefore, substituting (20) and (21) into (18), we obtain (16). The first identity of (21) can be proved in a way similar to that used for MZVs in 
\cite[pp.~8 and 89]{ak}. Indeed, using the harmonic product of MZSVs 
(see \cite{ikoo} and \cite{mu}), we have 
\begin{equation*}
\begin{aligned}
\zeta(t)\zeta^{\star}(\{2\}^{s-1})
&=
\sum_{i=1}^{s}\zeta^{\star}(\{2\}^{i-1},t,\{2\}^{s-i})
-\sum_{i=1}^{s-1}\zeta^{\star}(\{2\}^{i-1},t+2,\{2\}^{s-1-i}),\\
\zeta(t+2)\zeta^{\star}(\{2\}^{s-2})
&=
\sum_{i=1}^{s-1}\zeta^{\star}(\{2\}^{i-1},t+2,\{2\}^{s-1-i})
-\sum_{i=1}^{s-2}\zeta^{\star}(\{2\}^{i-1},t+4,\{2\}^{s-2-i}),\\
&\hspace{3cm} \vdots\\
\zeta(t+2s-4)\zeta^{\star}(2)
&=
\zeta^{\star}(t+2s-4,2)+\zeta^{\star}(2,t+2s-4)-\zeta(t+2s-2)
\end{aligned}
\end{equation*}
for $s,t\in\mathbb{Z}_{\ge2}$. 
Further, adding up each side of all these identities, we obtain 
\begin{equation*}
\sum_{i=0}^{s-1}\zeta(2i+t)\zeta^{\star}(\{2\}^{s-1-i})
=
\sum_{i=1}^{s}\zeta^{\star}(\{2\}^{i-1},t,\{2\}^{s-i})
\end{equation*}
for $s\in\mathbb{Z}_{\ge1}$, $t\in\mathbb{Z}_{\ge2}$. 
The first identity of (21) is the case $t=4$ of this identity. This completes the proof of Theorem 3.
\end{proof}
\begin{remark}
(i) The identity (15) contains both (19) and the identity for $\zeta(2s+1)$ of 
\cite[Theorem 2]{ikoo} as special cases: $k=0$ and $k=1$, respectively. 
Compare this with \cite[Proof of Theorem 2]{ikoo}. 
These two cases give a $\{2,3\}$-basis expression of $\zeta(k)$ for any $k\in\mathbb{Z}_{\ge2}$ (see \cite[Theorem 2]{ikoo}). 
\par 
(ii) Taking $\alpha=1$ in \cite[Theorem 2.16]{i4}, I have the following general form of (15), which was proved in 2014: 
\begin{equation*}
\begin{aligned}
&\sum_{i=0}^{k}2^{k-i}\binom{i+r}{i}
\sum_{\begin{subarray}{c}
k_1+\cdots+k_{s}=k-i\\
k_j\in\mathbb{Z}_{\ge0}
\end{subarray}}
\zeta^{\star}(i+k_1+r+2, \{k_j+2\}_{j=2}^{s})\\
=
&\sum_{i=0}^{r}
\sum_{\begin{subarray}{c}
k_1+\cdots+k_{i+2}=k\\
r_1+\cdots+r_{i+1}=r+1\\
k_j\in\mathbb{Z}_{\ge0}, r_j\in\mathbb{Z}_{\ge1}
\end{subarray}}
2^{i+1+k_{i+2}}
\left\{\prod_{j=1}^{i}\binom{k_j+r_j-1}{k_j}\right\}
\binom{k_{i+1}+r_{i+1}-2}{k_{i+1}}
\binom{k_{i+2}+s-1}{k_{i+2}}\\
&
\times
\zeta_{-}(\{k_j+r_j\}_{j=1}^{i},k_{i+1}+k_{i+2}+r_{i+1}+2s-1)
\end{aligned}
\end{equation*}
for all $k,r\in\mathbb{Z}_{\ge0}$, $s\in\mathbb{Z}_{\ge1}$. 
It is interesting to find applications of this general form, and also to obtain generalizations of (16) concerning 
the $\{2,3\}$-basis expression of MZSVs. 
\end{remark}
\begin{remark}
This was obtained in 2018. The coefficient (17) has the following closed form: 
\begin{equation}
\begin{aligned}
c(s)
=&
\frac{2}{3}\{s(s-1)-1+(1-2^{1-2(s+1)})^{-1}\}\\
&-\frac{1}{18}\frac{(s+1)(2s+1)}{(1-2^{1-2(s+1)})}\frac{B_{2s}}{B_{2(s+1)}}
\end{aligned}
\end{equation}
for all $s\in\mathbb{Z}_{\ge1}$. Indeed, taking $n=s+1$ in the identity on 
page 154, line 11 of \cite{n} and using $B_0=1$, $B_2=1/6$, we have the following identity for the Bernoulli numbers: 
\begin{equation*}
\begin{aligned}
&\sum_{i=2}^{s+1}\binom{2(s+1)}{2i}B_{2(s+1-i)}B_{2i}(1-2^{1-2i})\\
=
&\{1-(2s+1)(1-2^{1-2(s+1)})\}B_{2(s+1)}-\frac{(s+1)(2s+1)}{12}B_{2s}
\end{aligned}
\end{equation*}
for $s\in\mathbb{Z}_{\ge1}$. Dividing both sides of this identity by 
$B_{2(s+1)}(1-2^{1-2(s+1)})$ and substituting the result into (17), we obtain (22). 
\end{remark}
\begin{note}
I take this opportunity to give some notes about my papers \cite{i1} and \cite{i4} 
and some of my related research. 
\par 
(i) I note that a revised manuscript of \cite{i1} which contains Remark 2.7 was first submitted to the journal on August 4, 2009. 
This is an additional explanation for the addition of Remark 2.7 to \cite{i1}. 
See also an explanation at the beginning of \cite[Remark 2.7]{i1} 
and arXiv:0908.2536v1. 
\par 
(ii) I note that the identities (27), (28), (57), (58) of \cite{i4} were proved in 2014 
(before October 15, 2014). (These new identities were discovered by using \cite[Proposition 1 (i)]{kr}.) I had already written them in 
unpublished manuscripts in 2014 (before October 15, 2014). 
For instance, the identity (28) is written in the expanded version of \cite{i2011} dated September 17, 2014, though it was already written in 
another unpublished manuscript of mine before September 17, 2014. 
It is interesting to generalize my identities above and my previous results \cite[Addendum]{i2011}, \cite[Identities (29) and (30)]{i4}. 
For non-trivial two-parameter generalizations of them, 
see \cite[Corollaries 2.10, 2.12, and Theorem 2.14]{i4}. These generalizations of 
mine and my result \cite[Theorem 2.15]{i4} were already 
written in my manuscripts submitted to journals on March 9, May 1 and 16, 2015. I note that I had already obtained many of the results of \cite{i4} 
before October 15, 2014, and had already written them in unpublished manuscripts before October 15, 2014. For instance, I obtained the identity (63) of \cite{i4} in February 2013, and wrote it in an unpublished manuscript in February 2013. 
Moreover, I wrote about this identity of mine and an outline of its proof 
(\cite[Proof of Theorem 2.13 (iv)]{i4}) in a research plan document in February 2013, and sent it to my then principal academic advisor by e-mail in February 2013. I note that the identity (63) of \cite{i4} is a non-trivial one-parameter generalization of the identity for multiple zeta-star values of Aoki and Ohno \cite[Theorem 1]{ao} (see \cite[Remark 6 (ii)]{i4}). 
In addition, my proof of the identity (63) gives a new hypergeometric proof of \cite[Theorem 1]{ao}, 
which is based on Andrews' hypergeometric identity \cite[Theorem 4]{an}. For another instance, I obtained the identities (8) and (26) of \cite{i4} in April 2014, and wrote them in an unpublished manuscript in April 2014. 
I note that my paper \cite{i4} is a revised version of my manuscript submitted to the Nagoya Mathematical Journal on March 9, 2015. (This manuscript was rejected on April 19, 2015.) 
A preprint of the manuscript submitted on March 9, 2015 was distributed on February 12, 2015. 
I had already written most of the contents of \cite{i4} in the manuscript and the preprint; hence it is already shown there that the one-, two-, three- and four-parameter multiple series of \cite{i4} satisfy various non-trivial and interesting relations. This shows the potentiality of these multiple series. 
I began the study of the three- and four-parameter multiple series in 2014. 
The study is based on my prior works on one- and two-parameter multiple series. In 2013, I proved a non-trivial one- and two-parameter generalization of my identity (29) of \cite{i4}. The former was written in my preprint distributed on October 28, 2013 and the latter in 
my preprint distributed on December 16, 2013. (These preprints are earlier versions of \cite{i4}.) 
The identity (8) of \cite{i4} is an extension of them to a three-parameter multiple series. Indeed, the case $\alpha=\beta=\gamma$ and the case $\beta=\gamma$ of (8) are the above one- and two-parameter generalization, respectively. 
I note that the problem posed at the end of $\textbf{(R1)}$ of \cite[Section 3]{i4} was already written in the above preprint distributed on December 16, 2013. 
Besides this, I wrote also a non-trivial one- (resp. two-) parameter generalization of my identity (30) of \cite{i4} in the above preprint distributed on October 28, 2013 (resp. December 16, 2013), though they were not written in \cite{i4}. 
In the manuscript submitted to Nagoya Math. J. on March 9, 2015 and its preprint distributed on February 12, 2015, I wrote the following identity, which is not written in \cite{i4}: 
\begin{equation*}
\begin{aligned}
&
\frac{\Gamma(\alpha)\Gamma(\beta)}{\Gamma(\alpha+\beta)}
\sum_{0\le m_1\le \cdots{\le}m_s<\infty}
\frac{(\alpha)_{m_1}}{{m_1}!}
\frac{(\beta)_{m_1}}{(\alpha+\beta)_{m_1}}
\frac{1}{(m_1+\gamma)^{r+1}}
\left\{\prod_{i=2}^{s}\frac{1}{(m_i+\alpha)(m_i+\beta)}\right\}\\
=
&\sum_{i=0}^{r}
\sum_{\begin{subarray}{c}
r_1+\cdots+r_{i+1}=r+1\\
1{\le}r_j\in\mathbb{Z}
\end{subarray}}
\sum_{0{\le}m_1<\cdots<m_{i+1}<\infty}
(-1)^{m_{i+1}}\frac{(\alpha+\beta-\gamma)_{m_{i+1}}}{(\gamma)_{m_{i+1}}}\\
&\times\left\{\prod_{j=1}^{i}
\frac{2m_j+\alpha+\beta}{(m_j+\gamma)^{r_j}(m_j+\alpha+\beta-\gamma)}
\right\}\frac{2m_{i+1}+\alpha+\beta}{(m_{i+1}+\gamma)^{r_{i+1}}(m_{i+1}+\alpha)^s(m_{i+1}+\beta)^s}
\end{aligned}
\end{equation*}
for all $0{\le}r\in\mathbb{Z}$, $1{\le}s\in\mathbb{Z}$, and $\alpha,\beta,\gamma\in\mathbb{C}$ such that $\mathrm{Re}\,\alpha>0$, $\mathrm{Re}\,\beta>0$, $\mathrm{Re}\,(\alpha+\beta)>\mathrm{Re}\,\gamma>0$, 
$2s-1>\mathrm{Re}\,(\alpha+\beta-2\gamma)$, where $(a)_m$ is the Pochhammer symbol; $\Gamma(z)$ is the gamma function. (This identity was also derived from \cite[Proposition 1 (i)]{kr}.) The above one- (resp. two-) parameter generalization of my identity (30) of \cite{i4} is the case $\alpha=\beta=\gamma$ 
(resp. the case $\beta=\gamma$) of this identity. I note that I proved 
the case $\alpha=\beta=\gamma$ in August 2012, and wrote it in an unpublished manuscript in August 2012. It was written also in the expanded version of \cite{i2011} submitted to a journal on March 5, 2013, which was rejected on May 9, 2013. For related results, see also \cite[(\textbf{A1}) and (\textbf{A2})]{i2011}. The identities (26), (45), (55) of \cite{i4} are variations of the above generalizations of my identities (29) and (30) of \cite{i4}. 
(The case $\beta=1$ of (55) was written in an unpublished manuscript of mine in April 2014.) I note that the identities (2), (10), (24), (43), (48), (66), (67), (70), (71) of \cite{i4} and the identity on page 731 of \cite{i4} were already written in my manuscript submitted to Nagoya Math. J. on March 9, 2015 and in its preprint distributed on February 12, 2015. 
Besides these identities, the following contents of \cite{i4} were also written in 
the manuscript and the preprint: the identities (14), (60), (62), (63), (77), 
(78), (80), (85), the case $k=0$ of (15), the case $\beta=\gamma$ of (61), 
the remarks $\textbf{(R2)}$, $\textbf{(R3)}$, and the problem posed at the end 
of $\textbf{(R1)}$ of \cite{i4}. 
The identity (44) of \cite{i4} was written in the manuscript of \cite{i4} submitted to a journal on May 1, 2015, which was rejected on May 11, 2015. 
The identities (33) and (34) were written in the manuscript of \cite{i4} submitted to a journal on May 16, 2015, which was rejected on January 23, 2016. 
\par 
(iii) In the manuscript submitted to Nagoya Math. J. on March 9, 2015 and its preprint distributed on February 12, 2015, I proved the following duality formula for the standard multiple Hurwitz zeta values 
by using an iterated integral representation of them: 
\begin{equation}
\sum_{0\le m_1< \cdots<m_{p}<\infty}
\prod_{i=1}^{p}\frac{1}{(m_i+\alpha)^{k_i}}
=
\sum_{0\le m_1< \cdots<m_{q}<\infty}
\frac{(m_{q}+1)!}{(\alpha)_{m_{q}+1}}
\left\{\prod_{i=1}^{q}\frac{1}{(m_i+1)^{k^{'}_i}}\right\}
\end{equation}
for all $1{\le}p,k_i\in\mathbb{Z}$ ($i=1,\ldots,p-1$), 
$2{\le}k_p\in\mathbb{Z}$, and $\alpha\in\mathbb{C}$ such that $\mathrm{Re}(\alpha)>0$, where $(a)_m$ is the Pochhammer symbol; $(\{k^{'}_i\}_{i=1}^{q})$ is the dual index of $(\{k_i\}_{i=1}^{p})$. See \cite{ig2007} and also 
\cite[Lemmas 2.2 and 2.3]{i2009}. For a definition of the dual index, see, e.g., \cite{i2009}. I stated the duality formula (23) in my talk at a seminar on February 13, 2008 (see \cite[Acknowledgments on p.~578]{i2009}; I remember the audience of my talk very well). See also the note on (23) below. Moreover, in the manuscript and the preprint, I showed that a large class of identities for the standard multiple Hurwitz zeta values can be derived from 
(23) by differentiating both sides of (23) and using the identities 
\begin{equation*}
\begin{aligned}
&\frac{(-1)^r}{r!}\frac{\mathrm{d}^r}{\mathrm{d}\alpha^r}
\left(\frac{1}{(\alpha)_{m_q+1}}\right)\\
=&\frac{(-1)^r}{r!}\frac{\mathrm{d}^r}{\mathrm{d}\alpha^r}
\left(\frac{1}{(\alpha)_{m_1+1}}\prod_{i=2}^{q}\frac{(\alpha)_{m_{i-1}+1}}{(\alpha)_{m_i+1}}\right)\\
=&
\frac{1}{(\alpha)_{m_q+1}}
\sum_{\begin{subarray}{c}
r_1+\cdots+r_q= r\\
       r_i\in\mathbb{Z}_{\ge0}
      \end{subarray}}
\sum_{\begin{subarray}{c}
0{\le}m_{11}\le\cdots{\le}m_{1 r_1}{\le}m_1\\
m_1<m_{21}\le\cdots{\le}m_{2 r_2}{\le}m_2\\
\vdots\\
m_{q-1}<m_{q 1}\le\cdots{\le}m_{q r_{q}}{\le}m_q
\end{subarray}}
\prod_{i=1}^{q}\prod_{j=1}^{r_i}\frac{1}{m_{ij}+\alpha}
\end{aligned}
\end{equation*}
for all $1{\le}q\in\mathbb{Z}$, $0{\le}r\in\mathbb{Z}$, and $m_1,\ldots,m_q\in\mathbb{Z}$ such that $0\le{m_1}<\cdots<m_q$. 
See also \cite[Section 2]{i2009}. I note that, in the expanded version of \cite{i2011} submitted to a journal on March 5, 2013, I had already used identities for the Pochhammer symbol $(a)_m$ of this kind for the purpose of proving relations 
for multiple series. My results on the standard multiple Hurwitz zeta values stated above 
were written also in the manuscript of \cite{i4} submitted to a journal on May 16, 2015, which was rejected on January 23, 2016. My results are not written in the published version \cite{i4}, because, in 2015, I extended them to the multiple series (24) and (25) below, and wrote the extended results in other manuscripts, which were distributed as preprints in October 2015; 
\begin{equation}
\sum_{0\le m_1{<_1}\cdots{<_{p-1}}m_{p}<\infty}
\prod_{i=1}^{p}\frac{1}{(m_i+\alpha)^{k_i}},
\end{equation}
\begin{equation}
\begin{aligned}
\sum_{\begin{subarray}{c}0{\le}m_1<_{1}\cdots<_{p-1}m_p<\infty\end{subarray}}
\frac{{m_p}!}{(\alpha)_{m_p}}
\left\{\prod_{i=1}^{p}\frac{1}{(m_i+\alpha)^{a_i}(m_i+1)^{b_i}}\right\},
\end{aligned}
\end{equation}
where $1{\le}p\in\mathbb{Z}$; $a_i, b_i,k_i\in\mathbb{Z}$ such that $a_i+b_i,k_i\ge1$ ($i=1,\ldots,p-1$), $a_p+b_p,k_p\ge2$; 
$\alpha\in\mathbb{C}$ with $\mathrm{Re}(\alpha)>0$; the symbols $<_i$ 
($i=1,\ldots,p-1$) denote $<$ or $\le$; $<_0=\le$. 
See \cite{i2022} and also \cite[Remark 10]{i2020}. 
Revised versions of the above manuscripts distributed in October 2015 
were submitted to many journals in 2016--2021. 
For instance, one was submitted to a journal on October 30, 2017, which was 
rejected on December 24, 2018. Another one was submitted to a journal 
on March 9, 2019, which was rejected on August 13, 2020. 
I posted one of the revised versions on arXiv.org in 2022; see \cite{i2022}. 
The multiple series (25) is an extension of the one-parameter multiple series studied in Coppo \cite{c}, Coppo and Candelpergher \cite{cc}, \'{E}mery \cite{e}, Hasse \cite{h}, \cite[Examples]{ig2007}, \cite{i4}. 
For the duality formula (23), I note also the following: In my talk at a seminar on February 13, 2008 mentioned above, I pointed out a similarity between the multiple series on the right-hand side of (23) and the Newton series studied by Kawashima in \cite{kaw2009}. Indeed, as explained in my talk, the Newton series 
has the factor $(\alpha)_{m_q}/{m_q}!$ in its summand and the multiple series on 
the right-hand side has the inverse ${m_q}!/(\alpha)_{m_q}$. 
I think that it is interesting to study this similarity further. 
In 2015, I extended also the results of \cite{i2009} to my multiple series (26) below, and wrote also the extended results in the above manuscripts distributed in October 2015; 
\begin{equation}
\begin{aligned}
\sum_{\begin{subarray}{c}0{\le}m_1<_{1}\cdots<_{p-1}m_p<\infty\end{subarray}}
\frac{(\alpha)_{m_1}}{{m_1}!}\frac{{m_p}!}{(\alpha)_{m_p}}
\left\{\prod_{i=1}^{p}\frac{1}{(m_i+\alpha)^{a_i}(m_i+\beta)^{b_i}}\right\},
\end{aligned}
\end{equation}
where $p$, $a_i$, $b_i$, $\alpha$, $<_i$ are the same as those in (25); 
$\beta\in\mathbb{C}\setminus\{0,-1,-2,\ldots\}$. 
The study of this kind of two-parameter multiple series was originated by me in 
\cite{ig2007}. The multiple series (24)--(26) generalize the extended multiple zeta value studied by Fischler and Rivoal \cite{fr}, Kawashima \cite{kaw}, and Ulanskii \cite{ul}. The extended multiple zeta value and (24)--(26) allow taking both $<$ and $\le$ in their summations. I note that multiple series of this kind naturally appear as derivatives of hypergeometric series (see \cite{i4}). 
I studied more general multiple series than (24)--(26) in the manuscripts of 
\cite{i4} submitted on March 9 and May 16, 2015 and in the above preprint distributed on February 12, 2015. For details of the study, see \cite{i4}. 
\par 
(iv) I note also the following research of mine related to (26): I proved the cyclic sum formulas for my multiple series (27) and (28) below in October 2012 and April 2013, 
respectively, and wrote them in unpublished manuscripts in October 2012 and April 2013; 
\begin{equation}
\sum_{0\le m_1\le \cdots {\le}m_p<\infty}\frac{(\alpha)_{m_1}}{{m_1}!}
\frac{{m_p}!}{(\alpha)_{m_p}}
\frac{1}{(m_1+\beta)^{k_1}}
\left\{ \prod_{i=2}^{p}\frac{1}{(m_i+\alpha)(m_i+\beta)^{k_i-1}} \right\},
\end{equation}
\begin{equation}
\sum_{0\le m_1< \cdots<m_p<\infty}
\frac{(\alpha)_{m_1}}{{m_1}!}
\frac{{m_p}!}{(\alpha)_{m_p}}
\frac{1}{(m_1+\beta)^{k_1}}\left\{\prod_{i=2}^{p}\frac{1}{(m_i+\alpha)(m_i+\beta)^{k_i-1}}\right\},
\end{equation}
where $1{\le}p,k_i\in\mathbb{Z}$ ($i=1,\ldots,p-1$), $2{\le}k_p\in\mathbb{Z}$, 
$\alpha\in\mathbb{C}$ such that $\mathrm{Re}(\alpha)>0$, 
$\beta\in\mathbb{C}\setminus\{0,-1,-2,\ldots\}$; $(a)_m$ is the Pochhammer symbol. I note that one side of my cyclic sum formula for (28) consists of the multiple 
series 
\begin{equation*}
\sum_{0{\le}m_1< \cdots<m_p<\infty}
\frac{(\alpha)_{m_1}}{{m_1}!}
\frac{{m_p}!}{(\alpha)_{m_p}}
\left\{\prod_{i=1}^{p-1}\frac{1}{(m_i+\alpha)(m_i+\beta)^{k_i-1}}\right\}
\frac{1}{(m_p+\alpha)^2(m_p+\beta)^{k_p-2}}.
\end{equation*}
The above three multiple series are special cases of my multiple series (26). 
My cyclic sum formula for (27) gives a sum formula for (27). 
I wrote also this sum formula in an unpublished manuscript in October 2012. 
My cyclic sum and my sum formula for (27) were written in my manuscripts submitted to a journal on March 15, 2013 and February 17, 2016. 
The manuscript submitted on March 15, 2013 was distributed as a preprint on 
December 5, 2013. My cyclic sum formula for (28) was written in an unpublished manuscript of mine on April 9, 2013. I wrote about my cyclic sum formula 
for (27) and for (28) in my postdoctoral research plan documents submitted to 
a previous affiliation of mine (Graduate School of Mathematics, Nagoya University, Japan) in Februaries 2013 and 2014, respectively. 
Both my cyclic sum formulas and my sum formula were written in a preprint 
of mine distributed on February 12, 2015 and in my manuscript submitted to a journal on February 6, 2018, which was rejected on June 11, 2018. 
I submitted revised versions of this rejected manuscript to many journals in 2018--2021. In the above manuscript submitted on February 17, 2016 
(rejected on March 10, 2016), I posed the problem 
whether my multiple series (29) below satisfies the cyclic sum formula; 
\begin{equation}
\begin{aligned}
&\sum_{0{\le}m_1{\le}\cdots{\le}m_p<\infty}
\frac{(\alpha)_{m_1}}{{m_1}!}\frac{(\beta)_{m_1}}{(\gamma)_{m_1}}
\frac{{m_p}!}{(\alpha)_{m_p}}\frac{(\gamma)_{m_p}}{(\beta)_{m_p}}\\
&\times
\left\{\prod_{i=1}^{p}\frac{1}{(m_i+\alpha)^{a_i}(m_i+\beta)^{b_i}(m_i+\gamma)^{c_i}}\right\},
\end{aligned}
\end{equation}
where $1{\le}p\in\mathbb{Z}$; $a_i,b_i,c_i\in\mathbb{Z}$ ($i=1,\ldots,p$) such that $a_i+b_i+c_i\ge1$ ($i=1,\ldots,p-1$), 
$a_p+b_p+c_p\ge2$; $\alpha,\beta,\gamma\in\mathbb{C}\setminus\{0,-1,-2,\ldots\}$ such that $\mathrm{Re}(\alpha+\beta-\gamma)>0$; $(a)_m$ is the Pochhammer symbol. To be more precise, I posed the problem whether the following special case of my multiple series (29) satisfies a relation similar to the cyclic sum formula for multiple zeta-star values: 
\begin{equation*}
\begin{aligned}
&\sum_{0\le m_1{\le} \cdots {\le}m_p<\infty}\frac{(\alpha)_{m_1}}{{m_1}!}\frac{(\beta)_{m_1}}{(\gamma)_{m_1}}
\frac{{m_p}!}{(\alpha)_{m_p}}\frac{(\gamma)_{m_p}}{(\beta)_{m_p}}\\
&\times\frac{1}{(m_1+\gamma)^{k_1}}
\left\{\prod_{i=2}^{p}\frac{1}{(m_i+\alpha)(m_i+\beta)(m_i+\gamma)^{k_i-2}}\right\},
\end{aligned}
\end{equation*}
where $1{\le}k_i\in\mathbb{Z}$ ($i=1,\ldots,p-1$), $2{\le}k_p\in\mathbb{Z}$. (The indexes of this multiple series are the same as those of my multiple series (27) and (28).) I think that it is interesting to study this problem of mine. 
This problem was written also in my postdoctoral research plan 
document submitted to a previous affiliation of mine (Graduate School of Mathematics, Nagoya University, Japan) in February 2016. 
The multiple series (29) was discovered by me in 2014 by studying the hypergeometric identities of Andrews \cite[Theorem 4]{an}, Krattenthaler and Rivoal \cite[Proposition 1]{kr}. 
My previous remark \cite[Remark 2.7]{i1} played an essential role for discovering (29) and its identities. 
I wrote (29) and its non-trivial identities in an unpublished manuscript in April 2014. In May 2014, I extended (29) to a four-parameter multiple series, which is studied in \cite{i4} (see (1) of \cite{i4}) and in the expanded version of \cite{i2011} dated September 17, 2014. I had already written the four-parameter multiple series in an unpublished manuscript in May 2014. 
\par 
(v) In Notes 1 and 2 (ii)--(iv) of the present paper, I referred to research documents of mine 
written in August 2012--February 2018: unpublished manuscripts, submissions 
to journals, preprints, and (postdoctoral) research plan documents. I note that all of these research documents are electronic files made in a computer system of 
a previous affiliation of mine (Graduate School of Mathematics, Nagoya University, Japan) in August 2012--February 2018. 
(This can be confirmed by asking people who read the contents of 
my electronic files in 2012--2018.) In fact, in 2012--March 2018, 
I used the computer system whenever I wrote research documents. 
For instance, I used it when I wrote the unpublished manuscripts on 
my identities (27), (28), (57), (58) of \cite{i4} in 2014 and the unpublished manuscripts on my cyclic sum formula for (27) and for (28) in October 2012 
and April 2013. I note that multiple Hurwitz zeta values and one- and two-parameter 
multiple series have been my main research objects since 2006 
(see, e.g., \cite{ig2007}). My results on these research objects form 
the basis of my research on three- and four-parameter multiple series. 
The study of three- and four-parameter multiple series was originated by me in 2014. 
\par 
(vi) In \cite[Remark 7 (i)]{i4}, I gave a new proof of Hoffman's identity 
$\zeta(\{1\}^l, k+2)=\zeta(\{1\}^k, l+2)$ ($k,l\in\mathbb{Z}_{\ge0}$; 
\cite[Theorem 4.4]{h}), which is the duality formula for $\zeta(\{1\}^l,k+2)$. 
My proof is based on the hypergeometric identities \cite[Theorem 4]{an} and \cite[Proposition 1 (i)]{kr}; 
therefore it can be regarded as a hypergeometric proof of the duality formula. 
It is interesting to generalize my proof to a proof of the duality formula 
for all MZVs. 
\end{note}
\begin{corrections}
(i) Page 713, line 9 from the bottom: ``and a revised'' should be ``and is a revised''. 
(ii) Page 726, line 10 from the bottom: ``I remark that'' should be ``I note that''. 
(iii) Page 726, line 8 from the bottom: ``and my observation'' should be ``and from my observation''. 
(iv) Page 755, lines 3--4: ``manuscript (2013).'' should be ``manuscript, submitted to a journal on March 5, 2013 and rejected on May 9, 2013.''. 
\end {corrections}
\begin{flushleft}
Nagoya, Japan\\
\textit{E-mail address}: masahiro.igarashi2018@gmail.com
\end{flushleft}
\end{document}